\newtheorem{thm}{Theorem}
\newtheorem{lm}{Lemma}
\newcounter{tdfn}
\newcounter{trk}
\def\:{\colon}
\def\0{{\mathbf 0}}
\def\1{{\mathbf 1}}
\title{An elementary proof that classical braids embed into virtual braids}
\author{V.O.Manturov \footnote{The present work is supported by the Laboratory of Quantum Topology ot the Chelyabinsk State University (Russian Govenrment Grant No,
 14.Z50.31.0020) and also by RFBR grants 13-01-00830,14-01-91161,
14-01-31288.}}
\date{}
\begin{document}

\maketitle

AMS MSC 57025, 20F36, 57027

{\Large

\begin{abstract}
The aim of the present note is to show that the natural map from classical braids to virtual braids is an inclusion; this proof does not use any complete invariants of classical braids; it is based on the projection from virtual braids to classical braids (similar to the one given in \cite{Projection}); this projection is the identity map on the set of classical braids. The projection is well defined not only for the virtual braid group but also for the quotient group of the virtual (pure) braid group by the so-called virtualization move. The idea of this projection is closely related to the notion of parity \cite{Parity} and to the groups $G_{n}^{k}$, introduced by the author \cite{Great}.
\end{abstract}

Virtual braids are a natural generalization of classical braids. Classical braids form a subgroup of the virtual braid group: if two classical braids are equal in the virtual braid group, then they are equal as classical braids. This fact was first given in \cite{FRR}. In the present paper, we give an elementary proof of a stronger statement. Namely, we prove that the composition of  the natural map from the classical (pure) braid group to virtual (braid) group with a projection to some to some natural quotient group of the virtual braid group is an inclusion.

Without loss of generality, we shall work with pure (classical and virtual) braids only. Moreover, later we shall restrict ourselves to some finite-index subgroup og the pure virtual braid group.

Let us fix a positive integer  $n$. By a {\em set of signs} we mean an {\em ordered collection of signs}: for each pair $i,j$ of distinct integers from $\{1,\dots,n\}$ the number $s_{ij}\in \{\pm 1\}$ is defined, so that $s_{ij}=-s_{ji}$.

We say that a set of signs is {\em realizable} if there exists such a set of distinct real numbers $N_{j},j\in \{1,\dots, n\}$ for which $s_{ij}=sign (N_{j}-N_{i})$.


We say that for some set $S$ of signs, two different indices $i,j\in \{1,\dots, n\}$ are  {\em adjacent} if for each  $k$ we have $s_{ik}=s_{jk}$.

We define the pure virtual braid group  $PB_{n}$ according to \cite{Bardakov} as the group having the following presentation:

The generators are
$$a_{ij},1\le i\neq j\le n.$$

The relations are:
$$a_{i,j}a_{i,k}a_{j,k}=a_{j,k}a_{i,k}a_{i,j}, i,j,k\;\;\;\mbox{distinct}; $$
$$a_{i,j}a_{k,l}=a_{k,l}a_{i,j}, i,j,k,l \;\;\;\mbox{distinct}; $$

Later on, we shall call words in $a_{ij}$ {\em braid-words}. For letters in braid-words, we say that the {\em writhe number} is $+1$ for all the generators: $w(a_{i,j})=+1$, and we set $w(a_{i,j}^{-1})=-1$.

This group is isomorphic to the pure virtual braid group with its standard presentation (see \cite{Bardakov}).

We introduce the following important quotient group of the pure braid group
$${\tilde PB}_{n}=PB_{n}\slash \langle a_{ij}=a_{ji}\rangle,1\le i\neq j\le n;$$
we also define the group $G_{n}^{2}$ to be the quotient group of $PB_{n}$ by the relations $a_{ij}^{2}=1$,
see \cite{Great}.

Having a braid-word $\beta$, we can represent it by a {\em braid diagram} as follows. Let $\beta$ consist of
letters $\beta_{1}\cdots \beta_{k}$. The {\em braid diagram} will consist of $n$ strands each connecting the upper point $(l,1)$ to the lower point $(l,0)$ for $l=1,\dots, n$;

The natural map $o$ from pure classical braid group to the group ${\tilde PB}_{n}$ is defined as follows. Given a pure classical $n$-strand braid  $b$; let us enumerate the strands of $b$ by natural numbers from  $1$ to $n$ according to their endpoints.

Note that unlike Artin's {\em local} enumeration of strands, this enumeration is {\em global}.

Let us start reading the classical crossings of $b$ from the top to the bottom.
 Assume the undercrossing strand approaches this crossing from the top-right; then we denote the undercrossing line by $j$, denote the overcrossing line by $i$, and associate with this crossing the generator $a_{ij}$. If the undercrossing strand approaches this crossing from the top-left, then we denote the undercrossing line by $j$, denote the ovecrossing line by $i$, and associate with this crossing the generator $a_{ij}^{-1}$, see Fig. \ref{bardakovscrossings}.

 Note that the above description can be also thought of as the description of the presentation
of  {\em pure virtual braids} by its planar diagrams: we ignore virtual crossings.

 \begin{figure}
 \centering\includegraphics[width=150pt]{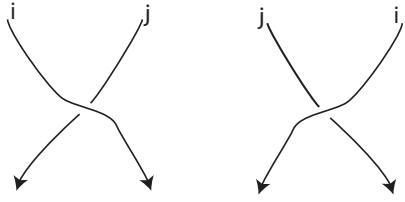}
 \caption{The crossing corresponding to $a_{ij}$ (left); the crossing corresponding to $a_{ij}^{-1}$ (right)}
 \label{bardakovscrossings}
 \end{figure}

For each such classical crossing  we write down the generator $a_{ij}$. Note that no generators correspond to virtual crossings; we have written a word in $a_{ij}$ and $a_{ij}^{-1}$; let us denote the resulting word by $o(b)$; for more details, see \cite{Bardakov}.

We say that two braid-words are {\em virtualization-equivalent} if they are obtained from each other by a sequence of
{\em virtualizations}, i.e., changes $a_{ij}\to a_{ji}$.

The geometrical interpretation of the virtualization move is shown in Fig. \ref{vir}.
\begin{figure}
\centering\includegraphics[width=200pt]{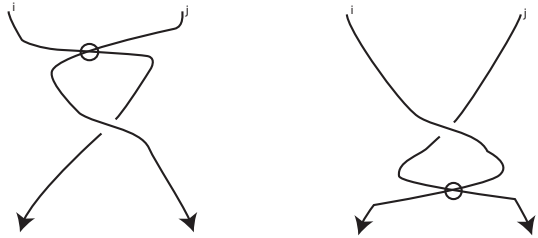}
\caption{The virtualization move}
\label{vir}
\end{figure}

Note that $o(b)$ depends only on the combinatorial structure of classical crossings only. We say that two virtual braids $b,b'$ are {\em detour-equivalent} (or {\em equal}) if the two words $o(b)$ and $o(b')$ coincide. Later on, we shall not make any difference between detour-equivalent virtual braids; in particular, we shall say that $b$ is {\em classical} if there exists a classical braid $b'$ which is detour-equivalent to $b$.

Note that when we apply a virtualization to a crossing, we do not change the writhe number of the crossing but the overpass and underpass strands change their roles.

By definition, virtualization-equivalent braid-words represent the same element in the group ${\tilde P}B_{n}$.

The aim of the present paper is to prove the following
\begin{thm}
The map $o$ defines an inclusion of the classical $n$-strand braid group to the group  ${\tilde PB}_{n}$.
\end{thm}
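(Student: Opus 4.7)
The plan is to construct a map $\pi\colon {\tilde PB}_{n}\to P_{n}$, where $P_{n}$ denotes the classical pure braid group, satisfying $\pi\circ o = \mathrm{id}_{P_{n}}$; injectivity of $o$ is then immediate. Define $\pi$ on a braid-word $w=\beta_{1}\cdots\beta_{k}$ with $\beta_{\ell}=a_{i_{\ell}j_{\ell}}^{\eps_{\ell}}$ by scanning left to right while maintaining a state $S=(s_{pq})$. The state starts at the canonical realizable configuration $s^{0}_{pq}=\sign(q-p)$ and, after each letter, is updated by flipping $s_{i_{\ell}j_{\ell}}$. The projected word $\pi(w)$ is the subword consisting of those $\beta_{\ell}$ whose indices $i_{\ell},j_{\ell}$ are adjacent in $S$ immediately before $\beta_{\ell}$ is read; this is a parity filter in the spirit of \cite{Parity}. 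Each kept letter corresponds to a crossing between strands that are adjacent in the current state, so $\pi(w)$ represents a well-formed element of $P_{n}$. The identity $\pi\circ o = \mathrm{id}_{P_{n}}$ is the easy direction: for a classical pure braid $b$, the formal state at each step coincides with the genuine horizontal ordering of the strands, classical crossings only occur between adjacent strands, and so no letter is ever discarded, giving $\pi(o(b))=o(b)=b$.

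The substance of the proof is to show that $\pi$ is well defined on ${\tilde PB}_{n}$, i.e. that $\pi(w)$ depends only on the class of $w$ modulo the relations of ${\tilde PB}_{n}$ (virtualization, commutativity of distant generators, and the triangle relation). For virtualization $a_{ij}\leftrightarrow a_{ji}$, adjacency is symmetric in $i$ and $j$, the state update flips the same pair of signs, and the kept letters represent the same element of $P_{n}$. For commutativity $a_{ij}a_{kl}=a_{kl}a_{ij}$ with $\{i,j\}\cap\{k,l\}=\emptyset$, the two letters toggle disjoint signs, the adjacency criteria for $(i,j)$ and $(k,l)$ are independent, and disjoint-index generators commute in $P_{n}$, so the two kept subwords agree.

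The main obstacle is the triangle relation $a_{ij}a_{ik}a_{jk}=a_{jk}a_{ik}a_{ij}$. The crucial observation is that inside this three-letter window only the three ``internal'' signs $s_{ij},s_{ik},s_{jk}$ change, while the ``external'' profiles $(s_{il},s_{jl},s_{kl})$ for $l\notin\{i,j,k\}$ stay fixed. Therefore adjacency of any pair drawn from $\{i,j,k\}$ during the window is determined by the internal signs together with a small boolean datum recording which of the three external profiles coincide. I would carry out a finite case analysis on the initial internal signs and on the coincidence pattern of external profiles; in each case I compute the two projected subwords and verify that they represent equal elements of $P_{n}$ using the classical pure braid relations combined with the virtualization identification already imposed. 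Because after the window the three internal signs have been flipped identically on either side, the subsequent state evolution is the same, so this local check suffices for global well-definedness. Combined with $\pi\circ o=\mathrm{id}$, this forces $o$ to be an inclusion, proving the theorem.
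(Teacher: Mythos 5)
Your overall strategy --- filter a braid-word by keeping only the crossings whose two indices are adjacent in the running sign-state, and use the result as a retraction onto classical braids --- is exactly the paper's construction (its ``good'' crossings and the deletion map $d$). But as written the proposal has two genuine gaps. First, the claim that a single pass of the filter already yields ``a well-formed element of $P_{n}$'' is false: adjacency of a kept letter is computed along the state evolution of the \emph{original} word, and once the bad letters (which also flipped signs) are deleted, a previously kept letter need not be adjacent in the state evolution of the \emph{filtered} word. The paper gives the explicit example $\beta=a_{13}a_{24}a_{14}a_{14}^{-1}a_{24}^{-1}a_{13}^{-1}$: one pass keeps $a_{14}a_{14}^{-1}$, whose letters are no longer good, and a second pass gives the empty word. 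So the filter is not idempotent, its one-pass output need not be (virtualization-equivalent to) a classical braid, and you must iterate to a stable word $d^{stab}$ and prove compatibility with the relations at every stage. Relatedly, reconstructing an actual classical braid from a word all of whose letters are good requires showing that every intermediate sign-state is realizable (this follows by induction from goodness, but it is a step you must make, not something that holds for the raw one-pass output).

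Second, and more seriously, your treatment of the triangle relation misses the one structural fact that makes the local check close: among the three letters $a_{ij},a_{ik},a_{jk}$ of the relation, the number of good ones is always $0$, $1$, or $3$ --- never $2$. If exactly two were good, the filtered subwords on the two sides would be, say, $a_{ij}a_{ik}$ versus $a_{ik}a_{ij}$; these share the index $i$, so far commutativity does not apply, the triangle relation needs all three letters, and the two subwords are \emph{not} equal in ${\tilde PB}_{n}$. Your planned case-by-case verification would simply fail in those cases unless you first prove they cannot occur; the paper's Lemma 3 does precisely this, by showing that if $i$ is adjacent to both $j$ and $k$ in the appropriate states then $j$ and $k$ become adjacent as well. (Two smaller omissions: you must also check insertion/deletion of $a_{ij}a_{ij}^{-1}$, where the two letters are simultaneously good or simultaneously bad, and you must check that letters outside a moved block retain their status, since goodness is a global property of the word.)
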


Let us first define the right {\em action} of the group $G_{n}^{2}$ on $S$ (and hence, the actions of  $PB_{n}$ and ${\tilde PB}_{n}$ on $S$) according to the evident rule: the generator $a_{ij}$ switches the signs of $s_{ij}$ and $s_{ji}$, and does not change the other signs.

Let us fix the set $B$ of signs corresponding to the set of numbers  $1,2,\dots, n$, in such a way that $s_{ij}=+1$ if and only if $j>i$.

%

Assume an element of the group $PB_{n}$ is given as a word $\beta=\beta_{1}\beta_{2}\dots \beta_{l}$
(here each $\beta_{l}$ is some $a_{ij}$ or $a_{ij}^{-1}$). This word will act on the set of signs $B$ on the right. More precisely, we shall consider the sets of signs
$B, \beta_{1}B,\beta_{2}\beta_{1}B,\beta_{3}\beta_{2}\beta_{1}B\dots$.
Each classical generator deals with exactly two indices. We say that a classical generator $\beta_{j}$ in the word $\beta$ is {\em good}, if its indices are {\em adjacent} in the word ${\bar \beta}B$  which precedes the action of this generator. It is clear that these two strands will remain adjacent in the word $\beta_{j}{\bar \beta} B$ that we obtain after the action of $\beta_{j}$.

This action is illustrated in Fig. \ref{action}. In the left part, we show the action of a classical (realizable) braid,
and in the right hand side we show the action of a virtual braid; in the left part, we show the realization at each step; in the right part, $s(1,2)=+1,s(1,3)=-1,s(2,3)=+1$ is not realizable.

The main idea of the proof of the main theorem is that the property of being ``good'' is closely related to the ``weak parity'' property widely used in virtual knot theory \cite{IMN}. Here classical braids are ``even'' and if two ``even'' ones are equivalent then they are equivalent within even ones (for more about parity, see \cite{Parity}).

\begin{figure}
\centering\includegraphics[width=350pt]{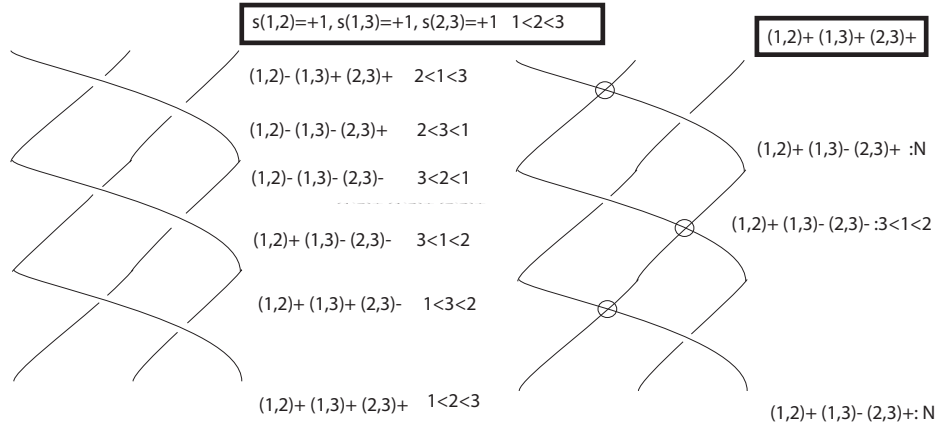}
\caption{A braid acting on a set of signs}
\label{action}
\end{figure}

More precisely, the following lemmas take place:

\begin{lm}
Let $\beta$ be a braid word, and assume it contains two neighbouring letters $a_{ij}$ and $a_{i,j}^{-1}$.
Then the two corresponding crossings are either both good or both bad.\label{lem2}
\end{lm}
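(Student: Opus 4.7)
The plan is to exploit a clean decoupling between the data that determines goodness of an $a_{ij}$ crossing and the data that the generator $a_{ij}^{\pm 1}$ actually modifies. By definition, the crossing at position $m$ with indices $i,j$ is \emph{good} iff, in the state $S := \beta_{m-1}\cdots \beta_{1}B$ of signs immediately before $\beta_{m}$ acts, the indices $i$ and $j$ are adjacent, i.e.\ $s_{ik}(S)=s_{jk}(S)$ for every $k\notin\{i,j\}$. On the other hand, the action rule recalled just before the lemma states that $a_{ij}^{\pm 1}$ toggles only the two entries $s_{ij}$ and $s_{ji}$ and leaves every other entry of the sign-set unchanged.

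Now suppose $\beta_{m}=a_{ij}$ and $\beta_{m+1}=a_{ij}^{-1}$ are neighbouring letters (the argument is symmetric if the exponents are swapped). The state just before $\beta_{m+1}$ acts is $S' := \beta_{m}S = a_{ij}S$; by the action rule, $S'$ agrees with $S$ on every entry except $s_{ij}$ and $s_{ji}$. In particular, for each $k\notin\{i,j\}$ one has $s_{ik}(S')=s_{ik}(S)$ and $s_{jk}(S')=s_{jk}(S)$. Hence the adjacency condition for $i,j$ in $S'$ holds termwise iff it holds in $S$, so $\beta_{m}$ and $\beta_{m+1}$ are simultaneously good or simultaneously bad.

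I do not anticipate any real obstacle; the lemma is a bookkeeping consequence of the fact that $a_{ij}^{\pm1}$ only modifies the two entries it names, while the predicate of being adjacent for the pair $\{i,j\}$ reads off entries with a third index. The only item that warrants a moment's care is that nothing intervenes between $\beta_{m}$ and $\beta_{m+1}$, so that $S'=a_{ij}S$ exactly rather than being altered by further letters; the ``neighbouring'' hypothesis is used precisely for this.
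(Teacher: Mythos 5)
Your proof is correct and follows exactly the reasoning the paper relies on (the lemma is stated there without a separate proof, but the key observation --- that $a_{ij}^{\pm 1}$ only toggles $s_{ij},s_{ji}$ while adjacency of $i,j$ reads off entries $s_{ik},s_{jk}$ with $k\notin\{i,j\}$ --- appears verbatim in the text where ``good'' is defined). Nothing to add.
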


\begin{lm}
If  a braid-word $\beta$ admits a third Reidemeister move so that
the three adjacent letters $\beta_{m}\beta_{m+1}\beta_{m+2}$ are transformed into
$\beta'_{m+2}\beta'_{m+1}\beta'_{m}$ then for each $j=m,m+1,m+2$ either both crossings
$\beta_{j},\beta'_{j}$ are good or both crossings are bad.

Moreover, among the three crossings $\beta_{m},\beta_{m+1},\beta_{m+2}$ there number of good crossings is $0,1,$ or $3$.\label{lem3}
\end{lm}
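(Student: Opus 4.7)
\textbf{Proof plan (for Lemma \ref{lem3}).} The approach is to compute, for each of the six letters that take part in the R3 move (three in $\beta_m\beta_{m+1}\beta_{m+2}$ and three in $\beta'_{m+2}\beta'_{m+1}\beta'_m$), the explicit condition on the state $\tau$ immediately preceding the R3 fragment that characterises when that letter is good. The key elementary observation is that $a_{ij}^{\pm 1}$ flips only $s_{ij}$ and $s_{ji}$, leaving all other signs fixed; in particular the action on the sign set does not depend on the writhe, so it suffices to treat the relation $a_{ij}a_{ik}a_{jk}=a_{jk}a_{ik}a_{ij}$ written in the paper.

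Let $\tau$ denote the state just before $\beta_m$ acts, and set $a=s_{ij}^\tau$, $b=s_{jk}^\tau$, $c=s_{ik}^\tau$ for the internal signs of the triangle $\{i,j,k\}$, and $x_l=s_{il}^\tau$, $y_l=s_{jl}^\tau$, $z_l=s_{kl}^\tau$ for each external index $l\notin\{i,j,k\}$. Tracking the state letter by letter along $\tau,\ a_{ij}\tau,\ a_{ik}a_{ij}\tau$, I would read off that $\beta_m$ is good iff $b=c$ and $x_l=y_l$ for every external $l$; that $\beta_{m+1}$ is good iff $a=b$ and $x_l=z_l$; and that $\beta_{m+2}$ is good iff $a=c$ and $y_l=z_l$. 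Performing the same computation along the chain $\tau,\ a_{jk}\tau,\ a_{ik}a_{jk}\tau$ for the new word, one finds that $\beta'_m$, $\beta'_{m+1}$, $\beta'_{m+2}$ satisfy exactly the same three conditions respectively. The reason is that between the two positions of any corresponding pair, exactly two of the relevant signs are flipped, and these two flips cancel pairwise in every adjacency equation (remembering that $s_{ji}=-s_{ij}$ throughout).

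From this tabulation both claims of the lemma are immediate. The first (corresponding letters $\beta_j$ and $\beta'_j$ are simultaneously good or simultaneously bad) is just a line-by-line reading of the table. The second (the number of good crossings among $\beta_m,\beta_{m+1},\beta_{m+2}$ is $0$, $1$ or $3$) reduces to the trivial fact that any two of the equalities in the triple $\{a=b,\ a=c,\ b=c\}$ force the third, and, pointwise in $l$, any two of the equalities in $\{x_l=y_l,\ x_l=z_l,\ y_l=z_l\}$ force the third.

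The main obstacle is purely bookkeeping: keeping straight which pair of signs has been flipped at each intermediate state, and verifying that the sign flips cancel correctly in every equation. Once the six-row table of goodness conditions is written out, both assertions of the lemma drop out directly, with no further ideas required.
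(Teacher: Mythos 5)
Your proposal is correct and follows essentially the same route as the paper: both arguments rest on the facts that $a_{pq}^{\pm 1}$ flips only $s_{pq},s_{qp}$ and that adjacency (equivalently, equality of the relevant signs) is transitive --- the paper phrases this as preservation of adjacency under the intervening flips, while you tabulate the explicit goodness conditions for all six letters. If anything your table is slightly more careful than the paper's proof, since it also checks the adjacency equations at the internal indices $l\in\{i,j,k\}$, where the flips and the antisymmetry $s_{ji}=-s_{ij}$ actually matter, a case the paper passes over with ``for each $l\neq i,j,k$''.
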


\begin{proof}
Let $\beta={\bar \beta}\beta_{m}\beta_{m+1}\beta_{m+2}{\tilde \beta}$.
Assume the strand numbers for $\beta_{m},\beta_{m+1},\beta_{m+2}$ are $(i,j),(i,k),(j,k)$ respectively.
Assume $\beta_{m}$ is good; this means that the strands $(i,j)$ are adjacent after the action of ${\bar \beta}$
on the standard set of signs $B$. When we consider the crossing $\beta'_{n}$ in $\beta'={\bar \beta}\beta'_{m+2}\beta'_{m+1}\beta'_{m}$, the strands corresponding tho this crossing are adjacent as well because the action of $\beta'_{m+2},\beta'_{m+1}$ changes the signs of $(j,k)$ and $(i,k)$ and does not change other adjacencies.

The cases of pairs $\beta_{m+1},\beta'_{m+1}$ and $\beta_{m+2},\beta'_{m+2}$ are considered analogously.

Now, we have to show that if two of the three crossings $\beta_{m},\beta_{m+1},\beta_{m+2}$ are good, then so is the third one.
We consider only one case. The other cases are considered similarly.

Denote by $S_{p}$ the collection of signs after the action of ${\bar \beta}$.
We assume that the crossings $\beta_{m}$ and $\beta_{m+1}$ are both good. Denote by $S'_{p}$ the collection of signs after the action of ${\bar \beta}{\beta_{m}}$. Then the indices $i,j\in \{1,\dots, n\}$ are adjacent in $S_{p}$ and the indices $i,k$ are adjacent in $S'_{p}$.
This means that in $S'_{p}$ the index $i$ is adjacent to both $j$ and $k$. In particular, this means that for each $l\neq i,j,k$: $s'_{p}(j,l)=s'_{p}(i,l)=s'_{p}(k,l)$. Thus, after the action of ${\beta_{m+1}}$, $j$ and $k$ become adjacent, which means that the crossing $\beta_{m+2}$ is good.

\end{proof}

From the definition, we get the following
\begin{lm}
If a good crossing of some diagram does not take part in some Reidemeister move, then it remains good after this move is performed.\label{lem4}
\end{lm}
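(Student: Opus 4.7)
The plan is to observe that the adjacency condition defining a good crossing is a local property of the sign set present \emph{just before} that crossing acts, and this sign set is fully determined by the right action of the prefix word on the standard set $B$. It will therefore suffice to show that each allowed local move (second Reidemeister, third Reidemeister, far commutativity, and, in ${\tilde PB}_{n}$, virtualization) preserves, as an operator on sign sets, the total action of the subword it modifies. Once this is established, the sign set at the position of every untouched crossing is unchanged, and hence its adjacency status is unchanged.

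\textbf{Action preservation for each move.} Each generator $a_{ij}$ or $a_{ij}^{-1}$ acts on a sign set by toggling $s_{ij}$ and $s_{ji}$ and leaving the other entries fixed; in particular $a_{ij}$ and $a_{ij}^{-1}$ act identically, and the virtualization $a_{ij}\mapsto a_{ji}$ does not alter the action. For the second Reidemeister move, the subword $a_{ij}a_{ij}^{-1}$ acts trivially. For far commutativity, $a_{ij}$ and $a_{kl}$ with $\{i,j\}\cap\{k,l\}=\emptyset$ toggle disjoint pairs of entries and thus commute as operators. For the third Reidemeister move, both $a_{ij}a_{ik}a_{jk}$ and $a_{jk}a_{ik}a_{ij}$ toggle each of $s_{ij},s_{ji},s_{ik},s_{ki},s_{jk},s_{kj}$ exactly once and touch no other entries, so they agree as operators on any sign set.

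\textbf{Conclusion.} I would then write the word as $\beta = u\cdot R\cdot v$, where $R$ is the subword involved in the move, and set $\beta' = u\cdot R'\cdot v$ for the word afterward. Every crossing of $\beta$ outside $R$ is naturally identified with a crossing of $\beta'$ at the same position. If such a crossing lies in $u$, its preceding sign set is unchanged because its prefix sits inside $u$. If it lies in $v$, its preceding sign set is obtained from $B$ by first applying $uR$, equivalently $uR'$ by the previous paragraph, and then applying a prefix of $v$; this is also unchanged. In either case the pair of strand-indices at the crossing is adjacent in the same sign set before and after the move, so the crossing remains good, exactly as claimed.

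\textbf{Where the work is.} Conceptually there is essentially no obstacle: every step reduces to a direct inspection of how a generator flips signs. The only point that requires care is to verify action-equality for \emph{every} allowed local transformation in the chosen presentation of ${\tilde PB}_{n}$ — including far commutativity and virtualization, not merely the two Reidemeister relations — since omitting any of them would leave a gap when the lemma is applied in the proof of the main theorem.
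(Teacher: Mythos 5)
Your proof is correct and takes exactly the route the paper leaves implicit: the paper offers no written argument for this lemma, stating only that it follows from the definition, and the justification it intends is precisely yours. Namely, goodness of a crossing depends only on the sign set obtained by acting with the prefix word on $B$, and each relation subword and its replacement (second and third Reidemeister, far commutativity, virtualization, with $a_{ij}^{-1}$ acting as $a_{ij}$) induce the same operator on sign sets, so every untouched crossing sees an unchanged preceding sign set and hence unchanged adjacency.
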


Let us define the word $d(\beta)$ as the word obtained from $\beta$ by deleting bad letters. According to the above, we get the following

\begin{lm}
If $\beta,\beta'$ are equal as elements of ${PB}_{n}$ then the words $d(\beta),d(\beta')$ are equal as elements of ${PB}_{n}$.

If $\beta,\beta'$ are equal as elements of ${\tilde PB}_{n}$, then $d(\beta),d(\beta')$
are equal as elements of  ${\tilde PB}_{n}$.
\end{lm}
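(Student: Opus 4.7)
The plan is to proceed by induction on the length of a sequence of elementary moves that transforms $\beta$ into $\beta'$. The elementary moves generating the equivalence in $PB_n$ are: free cancellation of a pair $a_{ij}^{\pm 1} a_{ij}^{\mp 1}$, the third Reidemeister move $a_{ij}a_{ik}a_{jk} \leftrightarrow a_{jk}a_{ik}a_{ij}$, and far commutativity $a_{ij}a_{kl} \leftrightarrow a_{kl}a_{ij}$ for distinct $i,j,k,l$; for $\tilde{PB}_n$ one also allows virtualization $a_{ij} \leftrightarrow a_{ji}$. It suffices to verify, for a single move, that $d(\beta) = d(\beta')$ in the respective group.

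The key observation enabling a local analysis is that the action of a braid word on the sign set $B$ factors through $G_n^2$, so equal elements of $PB_n$ (or $\tilde{PB}_n$) act identically on $B$. Consequently the goodness of a letter depends only on the image in $G_n^2$ of the prefix preceding it. By Lemma \ref{lem4} (together with this equality of actions), letters of $\beta$ located outside the local fragment affected by a move have their goodness preserved and matching that of the corresponding letters in $\beta'$.

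It remains to treat each move type locally. For free cancellation of $a_{ij}a_{ij}^{-1}$, Lemma \ref{lem2} gives that both letters are simultaneously good or simultaneously bad, so either both disappear under $d$ or both survive as a cancelling pair; in either case $d(\beta) = d(\beta')$. For the third Reidemeister move, Lemma \ref{lem3} pairs $\beta_j$ with $\beta'_j$ of identical goodness and restricts the number of good letters in the triple to $0$, $1$, or $3$. In the $0$-good case, both triples are erased and $d(\beta)$, $d(\beta')$ agree as words. In the $3$-good case, both triples survive intact and $d(\beta)$, $d(\beta')$ differ by the very same R3 relation. In the $1$-good case, the single surviving generator is the same on both sides (by the correspondence $\beta_j\leftrightarrow\beta'_j$), so $d(\beta)$ and $d(\beta')$ are equal as words. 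For far commutativity, $a_{kl}$ does not alter adjacencies involving $\{i,j\}$ and vice versa, so the goodness of each of the two letters is determined by the common prefix, matching on both sides; the surviving subword is either identical or related by the same commutativity. For virtualization in $\tilde{PB}_n$, goodness depends only on the unordered pair $\{i,j\}$ and the action on $B$ is unchanged, so the two words either coincide outright or differ by one virtualization, which is trivial in $\tilde{PB}_n$.

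The principal obstacle is the third Reidemeister case, specifically the ``exactly one good'' scenario: one must confirm that the unique surviving letter is literally the same generator on both sides. This is forced by the labelling convention in Lemma \ref{lem3}, which matches $\beta_m \leftrightarrow \beta'_m$, $\beta_{m+1} \leftrightarrow \beta'_{m+1}$, $\beta_{m+2} \leftrightarrow \beta'_{m+2}$; reading off the R3 presentation $a_{ij}a_{ik}a_{jk}=a_{jk}a_{ik}a_{ij}$ shows each such pair consists of the same generator, differing only in position within the triple, so after deletion of the other two (bad) letters the remaining letter appears at the same absolute position in both words and the equality $d(\beta)=d(\beta')$ is an identity of free-group words.
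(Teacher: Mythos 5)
Your proposal is correct and follows essentially the same route as the paper: reduce to a single relation, invoke Lemma \ref{lem2} for cancellation, Lemma \ref{lem3} for the third Reidemeister move, Lemma \ref{lem4} (via the fact that the sign action factors through $G_n^2$) for the letters outside the affected fragment, and a direct check for far commutativity and virtualization. You merely supply details the paper leaves implicit, notably the ``exactly one good'' case of R3 and the virtualization relation for ${\tilde PB}_{n}$, both of which you resolve correctly.
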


\begin{proof}
Indeed, one has to check that the lemma holds for the case when $\beta'$ differs from $\beta$ by a braid group relation. For the second Reidemeister move, we use Lemmas \ref{lem2}, \ref{lem4}. For the third Reidemeister move, we use Lemmas \ref{lem3},\ref{lem4}.
Finally, if $\beta'$ differs from $\beta$ by a far commutativity relation $\beta=a_{ij}a_{kl}\to a_{kl}a_{ij}=\beta'$ for all $i,j,k,l$ distinct, then it suffices to see that $d(\beta)$ either coincides with $d(\beta')$ or differs from $d(\beta)$ by the same relation $a_{ij}a_{kl}\to a_{kl}a_{ij}$.
\end{proof}

From the direct check we get the following
\begin{lm}
Let $\beta$ be a braid-word representing a pure virtual braid and assume $\beta$ acts trivially on the set of signs. If all letters of $\beta$ are good then
$\beta$ is virtualization-equivalent to a classical pure braid.
\end{lm}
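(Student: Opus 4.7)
The plan is to use the goodness of every letter to propagate realizability of the intermediate sign sets along $\beta$, and then to convert each letter into a classical crossing, with the virtualization freedom $a_{ij}\leftrightarrow a_{ji}$ absorbing the over/under ambiguity. At every step the two strands touched by the current letter will turn out to be consecutive in the linear order underlying the realization, so the letter can be drawn as a classical crossing between spatially adjacent strands.

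First I would prove by induction on $k$ that each intermediate sign set $S_k:=\beta_k\cdots\beta_1 B$ is realizable, say by distinct reals $N_1^{(k)},\dots,N_n^{(k)}$. The base $S_0=B$ is realized by $N_j=j$. For the step, assume $S_{k-1}$ is realized and that $\beta_k=a_{ij}^{\pm 1}$ is good, i.e.\ $s_{il}=s_{jl}$ in $S_{k-1}$ for every $l\neq i,j$. Transcribed via the realization, this says $\mathrm{sign}(N_l-N_i)=\mathrm{sign}(N_l-N_j)$ for all $l\neq i,j$, so no $N_l$ lies strictly between $N_i$ and $N_j$ and hence $N_i,N_j$ are consecutive in the sorted order. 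The action of $\beta_k$ flips $s_{ij}$ and $s_{ji}$, which is exactly what swapping the two consecutive reals $N_i$ and $N_j$ does; the swapped tuple realizes $S_k$.

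Using these realizations I would then build a classical pure braid diagram $b$ level by level. At height $k$ the realization of $S_k$ induces a left-to-right order on the strands; between heights $k-1$ and $k$ the two strands labelled $i,j$ sit at consecutive positions and must be swapped, so I draw this swap as a classical crossing. The over/under choice is made so that the writhe of the drawn crossing equals the exponent of $\beta_k$; this leaves exactly one binary ambiguity (which of the two strands is in front), and the two options produce, via the conventions of Fig.~\ref{bardakovscrossings}, the two letters $a_{ij}^{\pm 1}$ and $a_{ji}^{\pm 1}$ that are related by a single virtualization. Triviality of the action of $\beta$ on $B$ forces the final order to be $1,\dots,n$ again, so $b$ really is a pure classical braid; and by construction $o(b)$ agrees with $\beta$ up to at most one virtualization per letter.

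The step I expect to be the main obstacle is the realizability propagation in Step~1. All of the geometric story — ``adjacent means consecutive in a linear order,'' and hence ``classical crossing between spatially neighbouring strands'' — collapses as soon as a sign set encodes a non-transitive tournament, as illustrated by the right part of Fig.~\ref{action}. Once the induction rules this out under the goodness hypothesis, the remainder is bookkeeping: verifying that flipping a pair of consecutive reals yields the correct new signs, and that the local freedom in a classical crossing between consecutive strands matches the prescribed virtual letter up to one virtualization.
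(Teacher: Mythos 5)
Your proposal is correct and follows essentially the same route as the paper's proof: both arguments propagate realizability of the intermediate sign sets $\beta_k\cdots\beta_1 B$ (you supply the induction that the paper compresses into ``by construction, all collections \dots are realizable'') and then build the classical diagram level by level, matching each letter up to a single virtualization and using triviality of the action to get a pure braid. Your explicit verification that goodness forces $N_i,N_j$ to be consecutive, so that the swap of two consecutive reals realizes the next sign set, is exactly the detail the paper leaves implicit.
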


\begin{proof}
The initial collection of indices $B$ is realizable by definition.
Let $\beta=\beta_{1}\dots \beta_{m}$. By construction, all collections $\beta_{1}B,\beta_{2}\beta_{1}B,\dots,\beta_{m}\cdots\beta_{1}B$ are realizable.

This means that we can construct the braid-word $\beta'$ which is virtualization-equivalent
to $\beta$ step-by step. At each step, we will have no virtual crossings; thus, the resulting braid will be classical.
Having constructed a braid which is virtualization-equivalent to $\beta_{1}\cdots \beta_{j}$,
we have to add a crossing corresponding to $\beta_{j}$. We know which strands it deals with; besides, we know the writhe number of this crossing. So, at each step we get a classical pure braid $\beta'$
which is virtualization-equivalent to $\beta_{1}\beta_{2}\cdots\beta_{k}$. Since $\beta$ (hence, $\beta'$ and all intermediate braids) acts trivially on the set of signs, all lower ends of the resulting braid are in their natural order: $1<2<\cdots<n$.

Finally, we get a braid which is virtualizaiton-equivalent to $\beta$.
\end{proof}

Summing up the above argument and iterating the map $d$, we obtain an action $d^{stab}$ which maps braids from ${\tilde PB}_{n}$ to pure braids which are virtualization-equivalent to classical ones.

Note that the map $d$ is not an idempotent on the set of diagrams of pure virtual braids. Even in the case of the  braid-word $\beta$ with six crossings $a_{13}a_{24}a_{14}a_{14}^{-1}a_{24}^{-1}a_{13}^{-1}$ representing the trivial braid, we see that the letters $a_{14}$ and $a_{14}^{-1}$ are good, so, $d(\beta)=a_{14}a_{14}^{-1},$ and $d^{2}(\beta)$ is the empty word.

Now, assume $\beta$ acts trivially on the set of signs $B$ and let $\beta=\beta_{1}\to \cdots \beta_{k}=\beta'$ be a sequence of braid-words from ${\tilde PB}_{n}$, where each two adjacent
braid-words are related by a relation from ${\tilde PB}_{n}$.
Consider the  sequence of braid-words $\beta=d^{stab}(\beta)=d^{stab}(\beta_{1})\to \cdots \to d^{stab}(\beta_{k})=d^{stab}(\beta')$.
Each two adjacent braid-words $d^{stab}(\beta_{l})$ and $d^{stab}(\beta_{l+1})$ are either identical or they are related by a relation from  ${\tilde PB}_{n}$. Let ${\tilde \beta_{j}}$ be the classical braid-word which is virtualization-equivalent to $d^{stab}(\beta_{j})$ for $j=1,\cdots, k$.
We see that ${\tilde \beta}_{l}$ and ${\tilde \beta}_{l+1}$ are either identical or related by the same relation as $d^{stab}(\beta_{l}),d^{stab}(\beta_{l+1})$. Since all ${\tilde \beta}_{j}$ are classical, we get a sequence of classical Reidemeister moves from $\beta$ to $\beta'$.

This completes the proof of the main theorem.

I am very grateful to A. I.Gaudreau and I.M.Nikonov,  D.P.Ilyutko, and V.G.Bardakov for extremely useful comments improving the quality of
the text.

}

 \end{document}